\documentclass[letterpaper, 10 pt, conference]{ieeeconf}  % Comment this line out if you need a4paper

\IEEEoverridecommandlockouts                              % This command is only needed if 
\usepackage{graphicx,algorithm,amsthm,amsmath,amsfonts,verbatim,mathtools,enumerate,bm,hyperref,cite}
\usepackage[dvipsnames]{xcolor}
\usepackage{tikz, subcaption, pgfplots}
\usepackage{algpseudocode}
\pgfplotsset{compat=1.18}
\usepgfplotslibrary{statistics}

\allowdisplaybreaks

\newcommand{\norm}[1]{\left\lVert#1\right\rVert}
\newcommand{\mnorm}[1]{{\left\vert\kern-0.25ex\left\vert\kern-0.25ex\left\vert #1 
    \right\vert\kern-0.25ex\right\vert\kern-0.25ex\right\vert}}

\newtheorem{theorem}{Theorem}

\newcommand{\eg}{{\it e.g.}}

\title{\LARGE \bf Nonlinear Trajectory Optimization Models for Uncrewed Aerial Vehicles with Mobile Charging Support
}

\author{Minsen Yuan, Amanuel Adane, James Humann, and Yue Yu% <-this % stops a space
\thanks{M. Yuan and Y. Yu are with the Department of Aerospace Engineering and Mechanics, University of Minnesota, Minneapolis, MN 55455, USA ({\tt\small \{yuan0450, yuey\}@umn.edu}). A. Adane is with the Department of Computer Science, Cornell University, Ithaca, NY 14853, USA ({\tt\small aa2888@cornell.edu}). J. Humann is with DEVCOM Army Research Laboratory ({\tt\small james.d.humann.civ@army.mil}). Y. Yu would like to thank Samet Uzun for helpful early discussions.}
}

\definecolor{NLP_color}{rgb}{1,0,0}
\definecolor{MINLP_color}{rgb}{0.301,0.745,0.933}
\definecolor{MINLP_color}{rgb}{0,0.447,0.741}

\begin{document}

\maketitle
\thispagestyle{empty}
\pagestyle{empty}

%%%%%%%%%%%%%%%%%%%%%%%%%%%%%%%%%%%%%%%%%%%%%%%%%%%%%%%%%%%%%%%%%%%%%%%%%%%%%%%%
\begin{abstract}

Supporting Uncrewed Aerial Vehicles (UAVs) with mobile charging stations enables persistent UAV autonomy in infrastructure-sparse environments. In this setting,
trajectory optimization for UAVs is challenging because it couples task scheduling with when and where to recharge, as well as terrain-access constraints on where charging is available.
We propose a smooth nonlinear trajectory optimization model for UAV with mobile charging support.
Compared with existing results, the proposed model allows nonlinear charging dynamics mode via a unified battery dynamics model with disjunctive constraints on the time allocated to each mode. Furthermore, it provides 
smooth approximations of the disjunctive constraints with bounded approximation errors. By avoiding integer variables, these approximations enable efficient solution using smooth nonlinear optimization algorithms. We evaluate the proposed model on UAV missions with multiple spatially distributed tasks, nonlinear constant-current--constant-voltage charging dynamics, and terrain-access constraints on mobile charging support. Compared with mixed-integer nonlinear programs, the proposed model provides high-quality approximate solutions while reducing the computation time by orders of magnitude.

% Energy-sharing UAV-UGV systems extend the endurance of Uncrewed Aerial Vehicles (UAVs) by leveraging Uncrewed Ground Vehicles (UGVs) as mobile charging stations, enabling persistent autonomy in infrastructure-sparse environments. Trajectory optimization for these systems is often challenging due to UGVs' terrain access constraints and the discrete nature of task scheduling. We propose a smooth nonlinear program model for the joint trajectory optimization for these systems. Unlike existing models, the proposed model allows smooth parameterization of UGVs' terrain access constraints and supports partial UAV recharging. Further, it introduces a smooth approximation of disjunctive constraints that eliminates the need for computationally expensive integer programming and enables efficient solutions via nonlinear programming algorithms. We demonstrate the proposed model on a one-UAV-one-UGV system with multiple task locations. Compared with mixed-integer nonlinear programs, this model reduces the computation time by orders of magnitude.

\end{abstract}

%%%%%%%%%%%%%%%%%%%%%%%%%%%%%%%%%%%%%%%%%%%%%%%%%%%%%%%%%%%%%%%%%%%%%%%%%%%%%%%%
\section{Introduction}

%{\color{blue}Support UAV w/ mobile charging stations} Collaborative UAV-UGV systems provide a powerful platform to integrate the heterogeneous strengths of aerial and ground robots via \emph{energy-sharing}. In these systems, Uncrewed Ground Vehicles (UGVs) serve as mobile docking and charging stations that effectively extended the endurance of Uncrewed Aerial Vehicles (UAVs) via \emph{sharing} energy stored on UGVs. By combining UGVs' superior payload capacity together with UAVs' agility and elevated perspective, these systems enable a wide range of robotic applications in remote and challenging environments where access to traditional energy infrastructure is sparse. Examples of these applications include mapping, disaster response, surveillance, and precision agriculture \cite{ding2021review,chai2024cooperative,munasinghe2024comprehensive}. 

Supporting Uncrewed Aerial Vehicles (UAVs) with mobile charging stations---such as ground vehicles equipped with wireless charging capabilities---provides an effective approach to extending flight endurance and operational range. By enabling recharging between flights, such support reduces the need for UAVs to repeatedly return to fixed charging infrastructure. Mobile charging support is particularly valuable for long-duration missions with multiple spatially distributed tasks in remote or infrastructure-limited environments. Example applications of UAVs supported by mobile charging support include mapping, disaster response, surveillance, and precision agriculture \cite{ding2021review,chai2024cooperative,munasinghe2024comprehensive}. 

%{\color{blue}Planing UAV trajectories w/ mobile charging support} Coordinating the trajectories for energy-sharing UAV-UGV systems presents a unique challenge, especially in missions with multiple tasks locations. This difficulty is twofold. First, The UGV's terrain access constraints restrict the UGVs to a known road network. Second, scheduling the the order in which the UAVs and UGVs completes their individual tasks often require optimizing discrete decisions. When these discrete decisions are coupled with the terrain access constraints, the resulting optimization is often nonconvex and computationally expensive to solve. 

Planning trajectories for UAVs supported by mobile charging support introduces a challenging optimization problem, especially for missions with multiple spatially distributed tasks. The UAV must determine not only the order in which tasks are completed, but also when and where to access mobile charging stations. Moreover, mobile charging stations are often subject to terrain-access constraints that are more restrictive than those of UAVs, hence limiting where charging can occur. The resulting trajectory optimization problem for the UAV is often nonconvex and computationally expensive to solve.

One approach to optimize trajectories for energy-constrained UAV supported by mobile charging stations is to jointly optimize their trajectories.
One example is to use mixed-integer linear programming (MILP), which computes optimal UAV and mobile charging station trajectories via optimization over integer variables~\cite{wang2019,booth2020target,manyam2019}. Since MILP is often computationally expensive, there have been several strategies to simplify the problem under stronger assumptions. For example, if the potential rendezvous locations are known, a genetic algorithm provides an efficient alternative to MILP~\cite{eker2025}. If the order of UAV and mobile charging station actions is known, then optimizing the trajectories of the UAV and the mobile charging station reduces to a convex second-order cone program~\cite{diller2026}. Another approach discretizes the UAV battery level and formulates the joint routing and recharging problem as a generalized traveling salesperson problem~\cite{yu2018algorithms}. A more recent direction is to use deep reinforcement learning methods to train encoder-decoder-based transformer networks that generate the UAV-mobile charging station trajectories~\cite{mondal2024,li2025}. These trajectories can not only minimize mission time but also account for stochastic battery usage by constraining the risk of mid-mission battery depletion~\cite{mondal2025}.   

To address the difficulty of coupling between UAVs and mobile charging stations, an alternative approach is to decompose their trajectory planning into separate stages.
One approach is to first determine the UAV trajectory, then optimize the mobile charging station trajectories to support the UAV~\cite{mathew2015}. Another approach is to first determine the mobile charging station trajectories that provide a set of feasible rendezvous locations to support UAV recharging, then plan the UAV trajectory based on these locations~\cite{maini2015, mondal2025cooperative}. Early results use a greedy algorithm to generate the rendezvous locations, then compute the UAV trajectory accordingly~\cite{maini2015,maini2019}. More recent results augment this approach with an asynchronous team framework \cite{ramasamy2024} and task allocation heuristics based on minimum set covering~\cite{mondal2025cooperative}. 
A partition-based approach computes mobile charging station tours between regional release points and separate UAV tours within assigned subregions~\cite{lin2022robust}.
A reinforcement learning approach similarly clusters customers and then solves separate truck and drone routing subproblems~\cite{wu2023}.
Another approach is to decouple the planning of UAV and mobile charging station trajectories using reachable-set constraints~\cite{kim2024decoupled}. In all such cases, the resulting trajectory quality depends on how the planning problem is decomposed between the UAV and mobile charging stations.

% One approach to optimize energy-constrainted UAV trajectories with mobile charging stations is to jointly optimize their trajectories. On approach for such optimization is mixed integer linear programs (MILP), which computes an optimal UAV and mobile charging station trajectory via optimization over integer variables \cite{wang2019,manyam2019}. Since MILP is often computationally expensive, there have been several strategies to simplify the problem under stronger assumptions. For example, if the potential rendezvous locations are known, genetic algorithm provides an efficient alternative to MILP \cite{eker2025}. If the order of UAV and mobile charging station actions are known, then optimizing UAV-mobile charging station trajectories reduces to a convex second-order cone program \cite{diller2026}. A more recent direction is deep reinforcement learning methods to train encoder-decoder based transformer networks that generates the UAV-mobile charging station trajectories \cite{wu2023,mondal2024,li2025}. These trajectories can not only minimize mission time but also account for stochastic battery usage by constraining on the risk of mid-mission battery depletion \cite{mondal2025}.   

% To address the difficulty of coupling between UAVs and mobile charging stations, an alternate approach is via the \emph{two-echelon routing problem}, where the trajectories of the UAV and mobile charging stations are computed in two distinct phases. 

% {\color{blue} Limitations: nonlinear battery charging/discharging dynamics, integer variables}

In the existing work on UAV trajectory optimization with mobile charging support, two commonly used modeling choices can limit the applicability and computational scalability of these models.
First, many formulations use simplified battery charging models, such as full recharge before takeoff~\cite{maini2019,lin2022robust,mondal2024,mondal2025cooperative,ramasamy2024,mondal2025,eker2025} or a constant charging rate~\cite{yu2018algorithms}. These simplified models do not capture the flexibility of wireless charging or the nonlinear charging dynamics, such as those of lithium-ion batteries~\cite{hussein2011review}.
Second, many existing approaches rely on MILP to model discrete decisions, such as switching between charging and discharging modes, causing the solution time to grow rapidly with the number of integer variables~\cite{wang2019,booth2020target,manyam2019,maini2019,mondal2025cooperative,mathew2015,ramasamy2024}.

To address these limitations, we propose a novel \emph{nonlinear} trajectory optimization model for energy-constrained UAV systems with mobile charging support. 
First, we propose a battery-dynamics model that allows general nonlinear charging and discharging behavior. Rather than switching between separate dynamics, this model accounts for charging and discharging modes jointly through a single nonlinear dynamics equation, with disjunctive constraints on the time allocated to each mode. Second, to avoid the integer variables commonly used to model disjunctive constraints, we reformulate these constraints as equivalent nonsmooth constraints defined by pointwise minimum functions. We develop a parameterized smooth approximation based on \(\ell_p\)-norms. We provide rigorous bounds on the approximation error in both function value and gradient direction as the smoothing parameter varies. We demonstrate the proposed model through a UAV trajectory-optimization problem with mobile charging support. This problem includes nonlinear constant-current--constant-voltage charging dynamics for the UAV, multiple spatially distributed tasks for the UAV, terrain-access constraints on where charging can occur, and distinct velocity bounds for the UAV during charging and discharging. Compared with mixed-integer nonlinear programming, the proposed model---which solves a nonlinear program instead---reduces solution time from hours to minutes while maintaining robust success rates in numerical experiments.

\section{Trajectory Optimization Models for UAV systems with mobile charging support}\label{sec: Models}
%UAV Trajectory Optimization with Disjunctive Constraints

We formulate the trajectory optimization problem for UAV systems with mobile charging support as a nonlinear optimization problem with disjunctive constraints. We first introduce the decision variables that parameterize the UAV trajectory, then present the objective function and the physical and operational constraints.

\subsection{Trajectory Variables} \label{subsection: Trajectory_Variables}
Let \(N\in\mathbb{N}\) denote the total number of time stamps along the UAV trajectory. To simplify the notation, we let \([k] \coloneqq \{1, 2, \ldots, k\}\) for any \(k \in \mathbb{N}\).
\paragraph{Position Variables}
% We parameterize the UAV trajectory using its planar position. We let
% \(r_k\in\mathbb{R}^2\) denote the projection of the UAV's three-dimensional position onto the \(xy\)-plane at the \(k\)-th time stamp, for all \(k\in[N]\).
We parameterize the UAV trajectory by its position \(r_k\in\mathbb{R}^d\) at the \(k\)-th time stamp, for all \(k\in[N]\), where \(d\) denotes the spatial dimension.
\paragraph{Energy Variables}
% We characterize the UAV battery level in terms of the remaining flight time supported by the current battery energy. We let
% \(e_k\in\mathbb{R}_{\geq 0}\) denote the remaining flight time of the UAV at the \(k\)-th time stamp, for all \(k\in[N]\).
We characterize the UAV battery level as a fraction of its full capacity. We let \(e_k\in \mathbb{R}_{\geq 0}\) denote the normalized battery level at the \(k\)-th time stamp, for all \(k\in[N]\).

\paragraph{Time Variables}
We let \(s_k\in\mathbb{R}_{\geq 0}\) denote the duration between the \(k\)-th and the \((k+1)\)-th time stamps, for all \(k\in[N-1]\).
We further let \(c_k\in\mathbb{R}_{\geq 0}\) denote the charging duration allocated to the UAV at a mobile charging station between the \(k\)-th and the \((k+1)\)-th time stamps, for all \(k\in[N-1]\).

\subsection{Objective Function}
We choose the total mission time as the objective function. Minimizing total time directly captures the operational efficiency of the UAV system with mobile charging stations, encouraging timely completion of all required tasks while implicitly balancing travel and charging times. Since the time durations between consecutive time stamps are optimization variables, the total mission time is given by
\begin{equation} \label{eqn: obj}
   \textstyle  \sum_{k=1}^{N-1} s_k.
\end{equation}

\subsection{Trajectory Constraints}
We consider two types of constraints for the UAV trajectories: smooth constraints and disjunctive constraints. 
\paragraph{Smooth Constraints} We let \(\overline{r}_0\in\mathbb{R}^d\) and \(\overline{r}_f\in\mathbb{R}^d\) denote the initial and final position of the UAV trajectory, respectively. We consider the following initial and final constraints
\begin{equation}
    r_1=\overline{r}_0, \enskip r_N=\overline{r}_f.
\end{equation}
In addition, we consider the following constraint on the speed of the UAV
\begin{equation} \label{eqn: speed_const}
    \norm{r_{k+1} - r_k}_2 \leq v_{\texttt{d}} s_k,
\end{equation}
for all \( k \in [N - 1] \), where \(v_{\texttt{d}}\in\mathbb{R}_{>0}\) is the maximum UAV speed. 
% Note that the UAV position is three-dimensional, whereas the above constraint accounts only for planar motion. This is because we assume that the UAV operates at a constant altitude, and that the time required for vertical motion (\emph{e.g.}, during takeoff and landing) is negligible compared with the time spent in horizontal motion.

We impose the following bounds on the UAV battery level:
\begin{equation}
    e_{\min} \leq e_k \leq e_{\max},
    \qquad k \in [N],
\end{equation}
where \(0 \leq e_{\min} < e_{\max}\) denote the minimum and maximum allowable battery levels, respectively.
Let \(f_{\texttt{c}}(e,t)\) and \(f_{\texttt{d}}(e,t)\) denote the battery levels obtained after charging and discharging, respectively, for a duration \(t\) starting from battery level \(e\). We assume that
\begin{equation}
    \label{eqn:zero_duration_identity}
    f_{\texttt{c}}(e, 0) = f_{\texttt{d}}(e, 0) = e
\end{equation}
for every admissible battery level \(e\). Using the segment duration \(s_k\) and charging duration \(c_k\), we define the unified battery dynamics as
\begin{equation}
\begin{aligned}
    e_{k+1}
    &= f(e_k,s_k,c_k) \\
    &\coloneqq f_{\texttt{c}}(e_k,c_k)
    + f_{\texttt{d}}(e_k,s_k-c_k) - e_k,
\end{aligned}
\label{eq:battery_dynamics}
\end{equation}
for all \(k \in [N-1]\). In particular, \eqref{eqn:zero_duration_identity} implies that
\begin{equation}\label{eq:batteryDynamics}
    f(e_k,s_k,c_k) =
    \begin{cases}
        f_{\texttt{d}}(e_k,s_k), & c_k = 0, \\
        f_{\texttt{c}}(e_k,s_k), & c_k = s_k.
    \end{cases}
\end{equation}
Thus, \eqref{eq:battery_dynamics} recovers the discharging dynamics when \(c_k = 0\) and the charging dynamics when \(c_k = s_k\). This unified representation accommodates nonlinear charging and discharging maps satisfying \eqref{eqn:zero_duration_identity}. We enforce charging–discharging selection and the additional conditions required for charging through the disjunctive constraints introduced below.

\paragraph{Disjunctive Constraints} We consider the case where the UAV must visit a set of task locations along its trajectory (e.g., to monitor areas of interest). 
Let \(\tau\in\mathbb{N}\) denote the total number of task locations, with positions \(a_1,\ldots,a_\tau\in\mathbb{R}^d\).
We consider the following disjunctive constraints:
\begin{equation}
    \bigvee_{k \in [N]} \left\{r_k = a_i\right\},
\end{equation}
for all \( i \in [\tau]  \). 
These constraints ensure that the UAV visits each task point at least once along its trajectory.

Furthermore, the UAV can discharge its battery at any location, but it can charge only within one of \(J\) prescribed charging regions. We assume that a mobile charging station can serve the UAV within each region. 
% During charging, the UAV must remain on the mobile charging station and within the same charging region, traveling at a speed no greater than the station’s maximum speed. 
The constraints that apply when the UAV charges on a mobile charging station can, without loss of generality, be modeled as \(g_j(r_k,r_{k+1},s_k)\leq\bm{0}^{l}\), where \(g_j:\mathbb{R}^d\times\mathbb{R}^d\times\mathbb{R}\to\mathbb{R}^{l}\) is continuously differentiable for each \(j\in[J]\), \(l\) denotes the number of scalar constraints, and the inequality is interpreted componentwise.

% Consequently, the UAV's displacement over the charging segment cannot exceed \(v_{\texttt{c}} s_k\), where \(v_{\texttt{c}}\) denotes the maximum speed of the mobile charging station.
Thus, we impose the following disjunctive constraint for charging and discharging:

% Furthermore, the UAV can discharge its battery at any location, but we allow it to charge only within one of the prescribed charging regions characterized by \(\varphi_j(r)\leq \bm{0}^l\), where $\varphi_j:\mathbb{R}^d\to\mathbb{R}^l$ is continuously differentiable for each $j\in[J]$, and $J$ denotes the number of charging regions. We assume that a mobile charging station can serve the UAV within each region. During charging, the UAV remains on the mobile charging station and within the same charging region. Consequently, the UAV's displacement over the charging segment cannot exceed \(v_{\texttt{c}} s_k\), where \(v_{\texttt{c}}\) denotes the maximum speed of the mobile charging station.
% Thus, we impose the following disjunctive constraint for charging and discharging:
\begin{equation}
\label{eq:charging_disjunction}
\left(
\bigvee_{j \in [J]}
\left\{
% \begin{aligned}
%     c_k &= s_k,\\
%     \varphi_j(r_k) &\leq 0,\\
%     \varphi_j(r_{k+1}) &\leq 0,\\
%     \norm{r_{k+1}-r_k}_2
%     &\leq v_{\texttt{c}} s_k
% \end{aligned}
\begin{aligned}
    c_k &= s_k,\\
    g_j(r_k, r_{k+1}, s_k) &\leq \bm{0}^l
\end{aligned}
\right\}
\right) \lor \; \{c_k = 0\},
\end{equation}
for all \(k \in [N-1]\). 
% When charging occurs, the UAV speed constraints~\eqref{eqn: speed_const} are automatically satisfied, since $v_{\texttt{d}} > v_{\texttt{c}}$.
These constraints require the UAV to either discharge its battery or charge on a mobile charging station within the same prescribed charging region between consecutive time stamps.

\subsection{Trajectory Optimization Problem}
The decision variables introduced in Section~\ref{subsection: Trajectory_Variables} are
\begin{equation}\label{eqn: base var}
\left\{
        r_k, 
        e_k
    \right\}_{k=1}^N \cup \left\{
        s_k, c_k
    \right\}_{k=1}^{N-1}.
\end{equation}
The smooth constraints are summarized as follows:
\begin{equation}\label{eqn: smooth constr}
    \begin{aligned}
&r_1=\overline{r}_0, \enskip r_N = \overline{r}_f, \enskip e_1=e_{\max},\\
&\norm{r_{k+1} - r_k}_2 \leq v_{\texttt{d}} s_k,\enskip  k \in [N - 1],\\
&  e_{\min}\leq e_{k+1}\leq e_{\max},\enskip s_{\min} \leq s_k \le s_{\max},\enskip k \in [N - 1],\\
& e_{k+1} = f(e_k, s_k, c_k), \enskip k \in [N - 1].
    \end{aligned}
\end{equation}
Note that the velocity constraints admit equivalent smooth representations by squaring both sides. The disjunctive constraints are given by
\begin{equation}\label{eqn: disjunctive constr}
    \begin{aligned}
    &\bigvee_{k \in [N]} \left\{r_k = a_i\right\}, \enskip i \in [\tau], \\
    & \left(
\bigvee_{j \in [J]}
\left\{
\begin{aligned}
    c_k &= s_k,\\
    g_j(r_k, r_{k+1}, s_k) &\leq \bm{0}^l
\end{aligned}
\right\}
\right) \lor \; \{c_k = 0\}, \\
& \enskip k \in [N - 1].
    \end{aligned}
\end{equation}
We formulate the trajectory optimization problem for UAV systems with mobile charging stations as the following nonlinear program with disjunctive constraints:
\begin{equation}\label{opt: General_OCP}
\begin{array}{ll}
    \underset{\text{ Variables in \eqref{eqn: base var}}}{\text{minimize}}
    &  \sum_{k=1}^{N-1} s_k \\[4pt]
    \;\; \text{subject to}
    & \text{constraints in \eqref{eqn: smooth constr} and \eqref{eqn: disjunctive constr}}.
\end{array}
\end{equation}
The constraints in \eqref{eqn: smooth constr} are compatible with algorithms for smooth nonlinear programs, such as augmented Lagrangian methods. The main challenge in solving the trajectory optimization problem~\eqref{opt: General_OCP} arises from the disjunctive constraints in~\eqref{eqn: disjunctive constr} as they rely on logical OR operations and can induce a disconnected feasible set.  
In Section~\ref{sec: nonlinear_smoothing}, we present a traditional mixed-integer nonlinear formulation and our proposed smooth approximation approach for handling these disjunctive constraints.

\section{Nonlinear Smoothing for Disjunctive Constraints}
\label{sec: nonlinear_smoothing}
% We first summarize the smooth constraints as follows:
% \begin{equation}\label{eqn: smooth constr}
%     \begin{aligned}
% &r_1=\overline{r}_0, \enskip r_N = \overline{r}_f, \enskip e_1=e_{\max},\\
% &\norm{r_{k+1} - r_k}_2 \leq v_{\texttt{d}} s_k,\enskip  k \in [N - 1],\\
% &  e_{\min}\leq e_{k+1}\leq e_{\max},\enskip s_{\min} \leq s_k \le s_{\max},\enskip k \in [N - 1],\\
% & e_{k+1} = f(e_k, s_k, c_k), \enskip k \in [N - 1].
%     \end{aligned}
% \end{equation}
% The disjunctive constraints are as follows:
% \begin{equation}\label{eqn: disjunctive constr}
%     \begin{aligned}
%     &\bigvee_{k \in [N]} \left\{r_k = a_i\right\}, \enskip i \in [\tau], \\
%     & \left(
% \bigvee_{j \in [J]}
% \left\{
% \begin{aligned}
%     c_k &= s_k,\\
%     \varphi_j(r_k) &\leq 0,\\
%     \varphi_j(r_{k+1}) &\leq 0,\\
%     \norm{r_{k+1}-r_k}_2
%     &\leq v_{\texttt{c}} s_k
% \end{aligned}
% \right\}
% \right) \lor \; \{c_k = 0\}, \\
% & \enskip k \in [N - 1].
%     \end{aligned}
% \end{equation}
% The disjunctive constraints in~\eqref{eqn: disjunctive constr} pose unique challenges for optimization, as they rely on logical OR operations and induce a disconnected feasible solution set. 

We first discuss how to model these constraints using discrete variables, leading to a mixed-integer nonlinear programming approach. We then propose an alternative approach that first reformulates the constraints in~\eqref{eqn: disjunctive constr} as nonsmooth constraints and subsequently approximates them with smooth nonlinear functions that are compatible with nonlinear programming.

\subsection{Disjunctive Constraints via Discrete Variables}
A standard approach to model the disjunctions in \eqref{eqn: disjunctive constr} uses binary variables. These disjunctions can be modeled as follows:
\begin{equation}\label{eq: binaryConstraints}
    \begin{aligned}
        & \sum_{k=1}^N U_{ki} = 1, \enskip U_{ki} \in \{0,1\}, \enskip k \in [N], \enskip i \in [\tau],\\ 
        & \|r_k - a_i \|_{\infty} \leq \mu (1 - U_{ki}), \enskip k \in [N], \enskip i \in [\tau],\\
        & \sum_{j=1}^{J + 1} Z_{kj} = 1, \enskip Z_{kj} \in \{0,1\}, \enskip k \in [N - 1], \enskip j \in [J+1], \\
        & |c_k - s_k|\leq \mu (1 - Z_{kj}), \enskip k \in [N - 1], \enskip j \in [J], \\
        & g_j(r_k, r_{k+1}, s_k) \leq  \mu (1 - Z_{kj})\cdot \bm{1}^l,\enskip  k \in [N - 1], \enskip j \in [J], \\ 
        & | c_k |\leq \mu \left(1 - Z_{k(J + 1)}\right), \enskip k \in [N - 1],
    \end{aligned}
\end{equation}
where $\mu \in \mathbb{R}_{>0}$ is a sufficiently large constant. This is the classical Big-M method. The idea is to encode each case of a disjunction with a binary variable and constraints that enforce the case for one value of the binary variable and are trivially satisfied otherwise. Then, constraining the binary variables of each disjunction to sum to one ensures a case is satisfied. In practice, and within our experiments, $\mu$ is set per constraint to the smallest value that makes the constraint redundant when deactivated, as large values weaken the continuous relaxation used by branch-and-bound \cite{camm1990bigm}.

We reformulate the optimization problem~\eqref{opt: General_OCP} as a \emph{mixed-integer nonlinear program (MINLP)} with the following decision variables:
\begin{equation}\label{eqn: MINLP var}
\left\{r_k, e_k, 
        \{U_{k i}\}_{i=1}^{\tau}
    \right\}_{k=1}^N \cup \left\{
        s_k, c_k, \{Z_{kj}\}_{j=1}^{J+1}
    \right\}_{k=1}^{N-1}.
\end{equation}
The resulting MINLP is given by
\begin{equation}\label{opt: MINLP}
\begin{array}{ll}
    \underset{\text{ Variables in \eqref{eqn: MINLP var}}}{\text{minimize}}
    &  \sum_{k=1}^{N-1} s_k \\[4pt]
    \;\; \text{subject to}
    & \text{constraints in \eqref{eqn: smooth constr} and \eqref{eq: binaryConstraints}}.
\end{array}
\end{equation}
We can solve the MINLP above using branch-and-bound–based methods combined with nonlinear programming algorithms. For details on models, algorithms, and practical solution methods for MINLP, we refer interested readers to~\cite{grossmann1997mixed}.

\subsection{Disjunctive Constraints via Nonlinear Smoothing}
One limitation of the discrete-variable approach is that it leads to an exponential growth in the number of possible values for binary variables, which often makes scalable real-time solutions impractical. 
As an alternative, we introduce a continuous modeling approach for the disjunctive constraints in~\eqref{eqn: disjunctive constr} that avoids the use of discrete variables. To this end, we first reformulate the constraints in~\eqref{eqn: disjunctive constr} as follows: 
\begin{equation}\label{eqn: nonsmooth constr}
\begin{aligned}
    &\underset{k \in [N]}{\min} \norm{\begin{bmatrix}
        r_k - a_i\\
        \varepsilon
    \end{bmatrix}}_2 = \varepsilon, \enskip i \in [\tau], \\
    &\min_{j\in[J]} \left\{
    \norm{\begin{bmatrix}
        c_k \\
        \varepsilon
    \end{bmatrix}}_2,
    \left\|
    \begin{bmatrix}
        c_k-s_k\\
        \psi_\delta (g_j(r_k, r_{k+1}, s_k))\\
        \varepsilon\\
    \end{bmatrix}
    \right\|_2
\right\}=\varepsilon, \\
&  k \in [N - 1],
\end{aligned}
\end{equation}
where
\begin{equation}
    \textstyle [\psi_\delta(y)]_j = \begin{cases}
        0, & [y]_j \leq -\delta,\\
        \frac{([y]_j + \delta)^2}{4\delta}, & -\delta < [y]_j \leq \delta,\\
        [y]_j, & [y]_j> \delta.
    \end{cases}
\end{equation}
Here, \([y]_j\) denotes the \(j\)-th entry of the vector \(y\), \(\delta > 0\) is a small smoothing parameter (e.g., \(10^{-3}\)), and \(\varepsilon > 0\) is a tunable parameter.
The key idea is to reformulate each disjunctive constraint using pointwise minimum of the violation of candidate conditions. Here, the function \(\psi_\delta\) provides a smooth measure of the violation of inequality constraint.

Note that the constraints in \eqref{eqn: nonsmooth constr} are not compatible with algorithms for smooth nonlinear programs, due to the nonsmooth pointwise minimum function. 
A common approach to approximate the pointwise minimum function is via the \emph{log-sum-exp} function. However, the log-sum-exp function often causes numerical instabilities due to the rapid growth of the exponential function~\cite{uzun2024optimization}.

To overcome this limitation, we propose a smooth function based on the \(\ell_p\)-norm. In particular, for the nonsmooth minimum \(\min_{i\in[n]} z_i\), where \(z_i\geq \epsilon>0\) for all \(i\in[n]\), we propose the following smooth approximation:
\begin{equation} \label{eqn: proposed_min}
    \phi_p(z)\coloneqq \left(\sum_{i=1}^n z_i^{-p}\right)^{-1/p},
    \enskip p>0.
\end{equation}
The following theorem provides bounds on both the value and gradient direction of the smooth approximation~\eqref{eqn: proposed_min}.

\begin{theorem} \label{theorem: Lp-norm}
    Let \(z\in\mathbb{R}_{>0}^n\), \(z_{\min}=\min_{i\in[n]} z_i\), \(A_{\min}(z)=\{j|j\in[n], z_j=z_{\min}\}\), and \(\Delta= -z_{\min}+\underset{j\in[n], j\notin A_{\min}(z)}{\min} z_j\). Let \(m\) denote the cardinality of set \(A_{\min}(z)\). Let \(v^\star\in\mathbb{R}^n\) be such that
    \begin{equation}\label{eqn: nominal gradient}
        v^\star_i=\begin{cases}
        \frac{1}{m}, & \text{if } i\in A_{\min}(z),\\
        0, & \text{otherwise.}
    \end{cases}
    \end{equation}
    Then 
    \begin{equation}\label{eqn: softmin function value}
        \exp\left(\frac{m-n}{pm \left(1+\Delta/z_{\min}\right)^p}\right)z_{\min}\leq m^{\frac{1}{p}}\phi_p(z)\leq z_{\min}.
    \end{equation}
    Furthermore,
    \begin{equation}\label{eqn: softmin gradient direction}
         \frac{\langle \nabla \phi_p(z), v^\star\rangle}{\norm{\nabla \phi_p(z)}_2\norm{v^\star}_2}\geq \exp\left(\frac{m-n}{2m \left(1+\Delta/z_{\min}\right)^{2p+2}}\right).
    \end{equation}
\end{theorem}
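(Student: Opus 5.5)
Both bounds follow from writing everything in terms of the ratios \(t_i \coloneqq z_{\min}/z_i \in (0,1]\). By definition \(t_i = 1\) for \(i \in A_{\min}(z)\). For \(i \notin A_{\min}(z)\) we have \(z_i \geq z_{\min}+\Delta\), hence
\[
t_i \leq \rho \coloneqq (1+\Delta/z_{\min})^{-1} < 1 .
\]
With this notation,
\[
\phi_p(z) = z_{\min}\Bigl(m + \textstyle\sum_{i\notin A_{\min}(z)} t_i^{p}\Bigr)^{-1/p},
\]
so that
\[
m^{1/p}\phi_p(z) = z_{\min}\,(1+S/m)^{-1/p}, \qquad S \coloneqq \textstyle\sum_{i\notin A_{\min}(z)} t_i^{p} \in \bigl[0,(n-m)\rho^{p}\bigr].
\]

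\textbf{Value bound.} The upper bound \(m^{1/p}\phi_p(z)\le z_{\min}\) holds because \(S\ge 0\). For the lower bound, use \(\log(1+x)\le x\):
\[
(1+S/m)^{-1/p} = \exp\!\bigl(-\tfrac1p\log(1+S/m)\bigr) \ge \exp\!\bigl(-\tfrac{S}{pm}\bigr) \ge \exp\!\Bigl(\tfrac{m-n}{pm}\rho^{p}\Bigr).
\]
Since \(\rho^p = (1+\Delta/z_{\min})^{-p}\), this is exactly the left inequality in \eqref{eqn: softmin function value}. If \(m=n\) (so \(\Delta\) is undefined), take \(S=0\) and both sides coincide.

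\textbf{Gradient direction.} Differentiate to get
\[
\partial_i \phi_p(z) = \bigl(\textstyle\sum_j z_j^{-p}\bigr)^{-1/p-1} z_i^{-p-1},
\]
so \(\nabla\phi_p(z)\) is a positive multiple of the vector \(w\) with \(w_i = t_i^{p+1}\). The ratio in \eqref{eqn: softmin gradient direction} is scale-invariant, so we may replace \(\nabla\phi_p(z)\) by \(w\). Then
\[
\langle w, v^\star\rangle = \tfrac1m\cdot m = 1, \qquad \|v^\star\|_2 = m^{-1/2}, \qquad \|w\|_2^2 = m + T,\quad T \coloneqq \textstyle\sum_{i\notin A_{\min}(z)} t_i^{2p+2} \le (n-m)\rho^{2p+2}.
\]
The cosine therefore equals
\[
\frac{1}{m^{-1/2}\sqrt{m+T}} = (1+T/m)^{-1/2} \ge \exp\!\bigl(-\tfrac{T}{2m}\bigr) \ge \exp\!\Bigl(\tfrac{m-n}{2m}\rho^{2p+2}\Bigr),
\]
again using \(\log(1+x)\le x\). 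This is the claimed bound \eqref{eqn: softmin gradient direction}.

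The main care points are minor. First, the inequality \(t_i\le\rho\) for non-minimal indices relies on the definition of \(\Delta\) as the gap to the second-smallest distinct value. Second, the degenerate case \(m=n\) should be handled separately. No step is genuinely hard: once the ratio substitution is made, both bounds reduce to the single inequality \(\log(1+x)\le x\).
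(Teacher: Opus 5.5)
Your proof is correct and follows the paper's argument essentially step for step: the paper's quantity $q$ is your $S/m$, its gradient ratios $g_i/g_1$ are your $w_i = t_i^{p+1}$, and both bounds are closed with $\ln(1+x)\le x$ exactly as you do. Your explicit treatment of the degenerate case $m=n$, where $\Delta$ is undefined, is a small addition that the paper leaves implicit.
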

\begin{proof}
    See Appendix.
\end{proof}
%[What happened to \(\Delta\) if all \(z = z_{\min}\)?] 
Here, \(\Delta\) is the gap between the smallest and the second-smallest values, and \(v^\star\) is a Clarke generalized gradient~\cite{clarke1990optimization}. 
%The proof of Theorem~\ref{theorem: Lp-norm} is provided in Appendix~\ref{appendix:proof}. 
Theorem~\ref{theorem: Lp-norm} shows that the accuracy of the approximation in \eqref{eqn: proposed_min}---in both value and gradient direction, measured by the cosine function---increases with the value of $(1+\Delta/z_{\min})^p$, bringing both exponential terms in~\eqref{eqn: softmin function value} and~\eqref{eqn: softmin gradient direction} close to~1.

%The approximation accuracy with the value of $(1+\Delta/z_{\min})^p$.
%Note that the ratio \(\Delta/z_{\min}\) is influenced by the smoothing parameter \(\varepsilon\) introduced in \eqref{eqn: nonsmooth constr}.

We approximate the nonsmooth constraints in \eqref{eqn: nonsmooth constr} using the smooth minimum function proposed in \eqref{eqn: proposed_min}. Thus, we can reformulate the trajectory optimization problem~\eqref{opt: General_OCP} as a smooth \emph{nonlinear program (NLP)}:
\begin{equation}\label{opt: NLP}
\begin{array}{ll}
\underset{\text{Variables in \eqref{eqn: base var}}}{\text{minimize}}
    &  \sum_{k=1}^{N-1} s_k \\
    \;\; \text{subject to}
    & \text{constraints in \eqref{eqn: smooth constr} and smooth approx. } \\
    & \text{of the constraints in \eqref{eqn: nonsmooth constr} via \eqref{eqn: proposed_min}.}
\end{array}
\end{equation}
Note that all of the variables in optimization~\eqref{opt: NLP} are continuous (\emph{i.e.}, no discrete-valued variables) and all of the functions that appear in optimization~\eqref{opt: NLP} are differentiable. 

The nonlinear program~\eqref{opt: NLP} can be difficult to solve due to the choice of smoothing parameters. Specifically, small values of $\varepsilon$ and large values of $p$ sharpen the smooth minimum approximation~\eqref{eqn: proposed_min} but can cause rapid changes in the gradient near ties, rendering the resulting NLP numerically ill-conditioned.
% As a result, in principle one can solve this optimization problem using standard algorithms for nonlinear programs. However, small values of $\varepsilon$ and large values of $p$ sharpen the smooth minimum approximation~\eqref{eqn: proposed_min} but can make the resulting NLP more difficult to solve. 

To address this challenge, we propose a \emph{homotopy method}, a numerical continuation scheme that gradually varies the smoothing parameters to iteratively refine the approximation~\cite{malyuta2023fast}.
% A similar homotopy strategy has been used to handle logical constraints through smooth approximations~\cite{malyuta2023fast}.
% In practice, we observe that the [augmented Lagrangian method] combined with homotopy provides consistently robust performance. 
We summarize the homotopy method for solving optimization problem \eqref{opt: NLP}, where $\bm{x}$ denotes the column vector formed by stacking the decision variables in \eqref{eqn: base var}.
%, and let $\bm{x}^{0}$ denote its initial guess. We gradually decrease $\varepsilon$ and increase $p$, as summarized in Algorithm~\ref{alg:homotopy}.
%Specifically, we initialize the first homotopy stage with initial trajectory $\bm{x}^{0}$. 
At each stage of the homotopy method, we solve \eqref{opt: NLP} with the current $\varepsilon$ and $p$ using an NLP algorithm (\eg, the augmented Lagrangian method~\cite{chen2020convergence}) until the residuals satisfy the tolerances or the iteration count reaches its limit. We then use the resulting trajectory to initialize the next homotopy stage.

% using the augmented Lagrangian method~\cite{chen2020convergence} until the Karush--Kuhn--Tucker (KKT) residuals satisfy the tolerances or the iteration count reaches its limit. We refer to~\cite{haeser2018second} for a definition of the KKT residual. We then use the resulting trajectory to initialize the next homotopy stage.
\begin{algorithm}[t]
\caption{Homotopy Method}
\label{alg:homotopy}
\begin{algorithmic}[1]
\Require Initial trajectory $\bm{x}^{0}$; initial smoothing parameters
$\varepsilon^{0}, p^{0}$; update factors $\alpha>1$, $\beta\in(0,1)$;
bounds $\varepsilon_{\min}, p_{\max}$; number of homotopy stages $L$
\State $\varepsilon \gets \varepsilon^{0}$, \quad
       $p \gets p^{0}$, \quad $\bm{x} \gets \bm{x}^{0}$
\For{$\ell = 1,\dots,L$}
    % \State Solve \eqref{opt: NLP} with the current $\varepsilon$ and $p$ using the some nonlinear algorithms augmented Lagrangian method initialized at $\bm{x}$
    \State Solve \eqref{opt: NLP} with the approximation based on current values of \(\varepsilon\) and \(p\), obtain trajectory $\bm{x}$.
    %using an NLP algorithm initialized at $\bm{x}$
   % \State $\bm{x} \gets$ the resulting trajectory
    \If{$\ell<L$}
        \State $\varepsilon \gets \max\{\beta\varepsilon, \varepsilon_{\min}\}, \, p \gets \min\{\alpha p, p_{\max}\}$
    \EndIf
\EndFor
\State \Return $\bm{x}^{\star} \gets \bm{x}$
\end{algorithmic}
\end{algorithm}

\section{Numerical Simulations}
\label{sec:experiments}

We demonstrate the proposed model through a UAV trajectory-optimization problem with mobile charging support. This problem includes nonlinear constant-current--constant-voltage charging dynamics for the UAV, multiple spatially distributed UAV tasks, and terrain-access constraints on where charging can occur. 
We compare the proposed NLP model with the MINLP model to evaluate the computational scalability of the proposed model.

\subsection{Problem Setup}

We consider a problem with \(d=2\), representing the UAV's position in the \(xy\)-plane. We assume that the UAV operates at a constant altitude and that the time required for vertical motion (\emph{e.g.}, during takeoff and landing) is negligible compared with that spent in horizontal motion.
We consider \(J=4\) circular mobile charging regions, as shown in Fig.~\ref{fig: an_example_solution}. 
The function \(g_j\) in~\eqref{eq:charging_disjunction} is defined as
\begin{equation}\label{eq: defChargingCondition}
    g_j(r_k, r_{k+1}, s_k) \coloneqq \begin{bmatrix}
        \norm{r_k - o_j}_2 - \rho_j\\
        \norm{r_{k+1} - o_j}_2 - \rho_j \\
        \norm{r_k - r_{k+1}}_2 - v_\texttt{c} s_k
    \end{bmatrix},
\end{equation}
where \(o_j\) and \(\rho_j\) denote the center and radius of the charging region for station \(j\), respectively, and \(v_{\texttt{c}}\) denotes the station's maximum speed. The constraint \(g_j(r_k,r_{k+1},s_k)\leq\bm{0}^3\) ensures that both endpoints lie within the same charging region and that the distance traveled does not exceed \(v_{\texttt{c}}s_k\). Assuming straight-line motion between consecutive positions, the entire segment remains within the region because the region is convex.

% Note that the proposed model~\eqref{eq:charging_disjunction} accommodates more general regions defined by differentiable inequality constraints.

In the simulations, we adopt a constant-current--constant-voltage (CC--CV) charging model~\cite{hussein2011review}.
Specifically, we approximate the battery-level evolution using a linear function of time during the CC phase and an exponential function of time during the CV phase.
Let $e_{\mathrm{th}} \in \mathbb{R}_{>0}$ denote the battery level at which charging transitions from the CC phase to the CV phase, and let $\kappa \in \mathbb{R}_{>0}$ denote the charging rate during the CC phase. We define the CV charging function as follows:
\begin{equation}
\gamma(e,t)
\coloneqq e_{\mathrm{max}}-(e_{\mathrm{max}}-e)\exp(-t/\sigma),
\end{equation}
where $\sigma=(e_{\mathrm{max}}-e_{\mathrm{th}})/\kappa$ is the charging time constant chosen to ensure slope continuity at the CC--CV transition.
For $e_k<e_{\mathrm{th}}$, let $\tau_k\coloneqq(e_{\mathrm{th}}-e_k)/\kappa$ denote the time required to reach the charging threshold. Depending on the initial battery level $e_k$ and the charging duration $c_k$, the charging dynamics comprise three cases:
\begin{equation}
\label{eqn:charging_dynamic}
f_{\texttt{c}}(e_k,c_k)=
\begin{cases}
e_k+\kappa c_k,
& e_k<e_{\mathrm{th}},\ c_k\leq\tau_k,\\
\gamma(e_{\mathrm{th}},c_k-\tau_k),
& e_k<e_{\mathrm{th}},\ c_k>\tau_k,\\
\gamma(e_k,c_k),
& e_k\geq e_{\mathrm{th}}.
\end{cases}
\end{equation}
Furthermore, we define the discharging dynamics as 
\begin{equation}\label{eqn: discharging_dynamics}
f_{\texttt{d}}(e_k, s_k - c_k) = e_k - \zeta(s_k - c_k),
\end{equation}
% {\color{red} add slope here}
where \(\zeta \in \mathbb{R}_{>0}\) is the discharging rate. Note that we choose these terrain-access constraints and charging and discharging models for the simulations. The formulation~\eqref{eq:charging_disjunction} accommodates more general regions defined by other convex differentiable inequality constraints, \eg, convex polygons, while the battery dynamics~\eqref{eq:battery_dynamics} accommodate other nonlinear charging and discharging models that satisfy~\eqref{eqn:zero_duration_identity}.

We assume the UAV travels from the initial position $\overline{r}_0$ to the final position $\overline{r}_f$, as illustrated in Fig.~\ref{fig: an_example_solution}.
We set $e_{\max}=1$, $e_{\mathrm{th}}=0.7$, 
$v_{\texttt{d}}=36\,\mathrm{km/h}$, $v_{\texttt{c}}=10.8\,\mathrm{km/h}$, $\kappa=4.625$, and \(\zeta = 2.5\).
Additionally, we set $e_{\min}=0$, $s_{\min}=30\,\mathrm{s}$, and $s_{\max}=1\,\mathrm{h}$ for the inequality constraints in~\eqref{eqn: smooth constr}.
In Algorithm~\ref{alg:homotopy}, we set the initial smoothing parameters to \(\varepsilon^0 = 0.2\) and \(p^0 = 2\), the update factors to \(\alpha = 1.2\) and \(\beta = 0.6\), and the bounds to \(\varepsilon_{\min} = 5 \times 10^{-4}\) and \(p_{\max} = 12\).
We perform all simulations on the Minnesota Supercomputing Institute cluster (\url{https://www.msi.umn.edu/}). Each simulation uses one AMD EPYC 7763 CPU core and 12~GB of allocated memory.

\subsection{Comparison against MINLP}

We solve the proposed NLP model in~\eqref{opt: NLP} using Algorithm~\ref{alg:homotopy}. Specifically, within each homotopy stage, we apply the augmented Lagrangian method~\cite{chen2020convergence}, using \texttt{L-BFGS} (implemented in \texttt{fminunc} in \texttt{MATLAB}) to minimize the augmented Lagrangian at each iteration (see~\cite{doi:10.1137/0312021}). We terminate the augmented Lagrangian iterations when the Karush--Kuhn--Tucker (KKT) residuals satisfy the prescribed tolerances or the maximum number of iterations is reached. We refer to~\cite{haeser2018second} for the definition of the KKT residuals. The resulting trajectory is then used to initialize the next homotopy stage.

We provide two MINLP implementations, each based on \eqref{opt: MINLP}. Both are in the \texttt{JuMP}~\cite{JuMP} framework.
For the first, we use the open-source solver \texttt{Juniper}~\cite{Juniper}, which implements a branch-and-bound algorithm. We configure it to use \texttt{IPOPT}~\cite{IPOPT} as the inner NLP solver (with a
limited-memory Hessian approximation and a convergence tolerance of $10^{-4}$).
For the second, we use the open-source solver \texttt{SCIP}, which employs spatial branch-and-bound for global optimization~\cite{SCIPOptSuite10}. We configure \texttt{SCIP} with a constraint feasibility tolerance of $10^{-5}$. Additionally, we set the relative optimality gap to $10^{-2}$ for \texttt{Juniper} and $10^{-3}$ for \texttt{SCIP}. All other non-default solver settings can be found in Appendix~\ref{appendix:solver_settings}.
To accommodate the piecewise charging curves in~\eqref{eqn:charging_dynamic} within the spatial branch-and-bound framework, we introduce additional binary variables. The resulting reformulation of~\eqref{eq:battery_dynamics} is given in Appendix~\ref{appendix:formulation}, and is used within our \texttt{SCIP} implementation. Additionally, we note that the definition of $g_j$ in \eqref{eq: defChargingCondition}
enables an equivalent formulation of \eqref{eq: binaryConstraints} involving fewer binary constraints. This reformulation is described in Appendix~\ref{appendix: binaryConstraints}, and is used within both of the MINLP implementations.

The convergence of NLP algorithms is sensitive to initialization~\cite{betts1998survey,yuan2025filtering}. We warm-start Algorithm~\ref{alg:homotopy} with an initial trajectory \(\bm{x}^0\) that visits the charging station nearest to each task location before visiting that location, then returns to the same station before proceeding to the next task.
For a fair comparison, we warm-start \texttt{Juniper} and \texttt{SCIP} for the MINLP with the same initial trajectory \(\bm{x}^0\) used for the NLP model.

We demonstrate the solution obtained by solving the proposed NLP model~\eqref{opt: NLP} using Algorithm~\ref{alg:homotopy}. Fig.~\ref{fig: an_example_solution} shows the resulting UAV trajectory. Fig.~\ref{fig: uav_battery} shows how the UAV’s battery level and its distance to the closest mobile charging region evolve along this trajectory. The shaded regions indicate periods during which the UAV charges on a mobile charging station. Fig.~\ref{subfigure: history of UAV battery level} illustrates that the proposed model accommodates partial recharging, nonlinear charging dynamics, and multiple task visits within a single discharging cycle.

We compare the performance of the proposed NLP model~\eqref{opt: NLP} with that of the MINLP model~\eqref{opt: MINLP} for different numbers of UAV tasks (denoted by \(\tau\)) with a computation time limit of 10 hours for each case, as shown in Fig.~\ref{fig:three_method_comparison}.
Specifically, for each value of \(\tau\), we consider 100 problem instances with randomly generated UAV task locations. For each method, we plot the solution results for all 100 problem instances, reporting the computation time, objective function value~\eqref{eqn: obj}, and constraint violation. We define the constraint violation using the \(\ell_{\infty}\) norm of the violations of all constraints in~\eqref{eqn: smooth constr} and~\eqref{eqn: disjunctive constr}. 
For the proposed NLP model~\eqref{opt: NLP}, we numerically assess the tightness of the smooth approximation~\eqref{eqn: proposed_min} by evaluating the corresponding exponential terms in~\eqref{eqn: softmin function value} and~\eqref{eqn: softmin gradient direction} in Theorem~\ref{theorem: Lp-norm} at the computed solutions. These terms are nearly~1, with median deviations from~1 across 100 runs ranging from $3.00\times10^{-9}$ to $3.59\times10^{-8}$ across all \(\tau\).
For the MINLP results, we report the objective function value and constraint violation for the best feasible solution found within the time limit, which achieves the lowest objective function value among all feasible solutions found. The proposed NLP model achieves a median computation time below one minute, while the MINLP model often reaches the ten-hour time limit as the number of task locations \(\tau\) increases. The NLP model also achieves objective values and constraint violations comparable to those of the MINLP model.

\begin{figure}[!t]
\centering
\begin{tikzpicture}

% include the EPS figure
\node[anchor=south west, inner sep=0] (img) 
    {\includegraphics[width=0.65\linewidth]{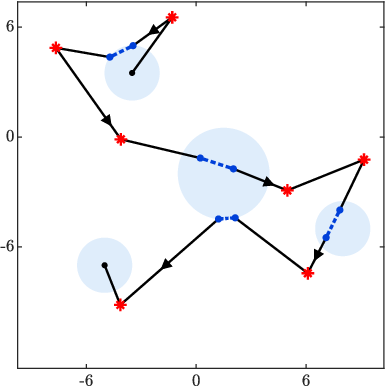}};

% define coordinate system based on image
\begin{scope}[x={(img.south east)}, y={(img.north west)}]

\node at (0.5,-0.05) {\footnotesize $ X\;(\mathrm{km})$};
\node[rotate=90] at (-0.05,0.5) {\footnotesize $ Y\;(\mathrm{km})$};

\node at (0.32,0.79) {\footnotesize $\overline{r}_0$};
\node at (0.25,0.28) {\footnotesize $\overline{r}_f$};

\end{scope}

\end{tikzpicture}
\caption{Solution trajectory obtained using Algorithm~\ref{alg:homotopy} for $\tau = 7$ UAV task locations (red asterisks). Blue circles represent mobile charging regions. Blue dashed lines indicate trajectory segments during which the UAV charges on a mobile station.}
\label{fig: an_example_solution}
\end{figure}

\begin{figure}[!t] 
    \centering
    \begin{subfigure}{\columnwidth}
    \pgfplotsset{every tick label/.append style={font=\scriptsize}}

\begin{tikzpicture}
% \begin{axis}[ymin=0, ymax=0.5,
%      xmin=0, xmax=2,  
%      width=0.95\linewidth,
%      height=0.5\linewidth,
%         xtick = {0, 1, 2},
%         ytick={0,0.2,0.4},
%         yticklabels={0,0.2,0.4},
%         xlabel={\footnotesize Time (h)},
%         ylabel={\footnotesize UAV Remaining T-o-F (h)}
%         ]

\begin{axis}[ymin=0, ymax=1.2,
     xmin=0, xmax=2,  
     width=0.95\linewidth,
     height=0.5\linewidth,
        xtick = {0, 1, 2},
        ytick=  {0, 0.5, 1},
        yticklabels={0, 0.5, 1},
        xlabel={\footnotesize Time (h)},
        ylabel={\footnotesize UAV Battery Level}
        ]

    % \addplot[solid, very thick, color = black, line join=round]
    %   table[x=time, y=t_A] {battery_refined.dat};

    \addplot[solid, very thick, color=black, line join=round]
    table[x=time, y expr={\thisrow{t_A}/0.4}]
    {battery_refined.dat};

      % AOI visitation points: (time, battery level)
      \addplot[ only marks, mark=asterisk, mark options={draw=red, line width=0.8pt, scale=1.2} ] coordinates {
        (0.104118083957826,0.739704790105435)
        (0.414823306242361,0.7460539871142925)
        (0.586775866257890,0.316172587075470)
        (0.977683013815450,0.5760502166269175)
        (1.10380627963211,0.260742052085265)
        (1.40601626564344,0.610987016315625)
        (1.82849233974556,0.163717488864610)
    };

    % \addplot[gray, dashed, thick, forget plot] coordinates {(\pgfkeysvalueof{/pgfplots/xmin},0.0) (\pgfkeysvalueof{/pgfplots/xmax},0.0)};

\addplot[
draw=none,
fill=gray,
fill opacity=0.25
] coordinates {
(0.177430388465736,\pgfkeysvalueof{/pgfplots/ymin})
(0.331306973516325,\pgfkeysvalueof{/pgfplots/ymin})
(0.331306973516325,\pgfkeysvalueof{/pgfplots/ymax})
(0.177430388465736,\pgfkeysvalueof{/pgfplots/ymax})
};

% interval 2
\addplot[draw=none, fill=gray, fill opacity=0.25] coordinates {
(0.712216345882417,\pgfkeysvalueof{/pgfplots/ymin})
(0.888772035875738,\pgfkeysvalueof{/pgfplots/ymin})
(0.888772035875738,\pgfkeysvalueof{/pgfplots/ymax})
(0.712216345882417,\pgfkeysvalueof{/pgfplots/ymax})
};

% interval 3
\addplot[draw=none, fill=gray, fill opacity=0.25] coordinates {
(1.18874723633867,\pgfkeysvalueof{/pgfplots/ymin})
(1.34504190686512,\pgfkeysvalueof{/pgfplots/ymin})
(1.34504190686512,\pgfkeysvalueof{/pgfplots/ymax})
(1.18874723633867,\pgfkeysvalueof{/pgfplots/ymax})
};

% interval 4
\addplot[draw=none, fill=gray, fill opacity=0.25] coordinates {
(1.54489488115274,\pgfkeysvalueof{/pgfplots/ymin})
(1.63035742961651,\pgfkeysvalueof{/pgfplots/ymin})
(1.63035742961651,\pgfkeysvalueof{/pgfplots/ymax})
(1.54489488115274,\pgfkeysvalueof{/pgfplots/ymax})
};

\end{axis}
\end{tikzpicture}
    \caption{History of UAV battery level.}
    \label{subfigure: history of UAV battery level}
    \end{subfigure}

    \bigskip

    \begin{subfigure}{\columnwidth}
    \pgfplotsset{every tick label/.append style={font=\scriptsize}}

\begin{tikzpicture}
\begin{axis}[ymin=-0.5, ymax=4,
     xmin=0, xmax=2,  
     width=0.95\linewidth,
     height=0.5\linewidth,
        xtick = {0, 1, 2},
        ytick = {0, 4.0},
        yticklabels={0, 4.0},
        xlabel={\footnotesize Time (h)},
        ylabel={\footnotesize Distance (km)}
        ]

    \addplot[solid, very thick, color = black, line join=round]
      table[x=time, y=d] {distance_history.dat};

    \addplot[gray, dashed, thick, forget plot] coordinates {(\pgfkeysvalueof{/pgfplots/xmin},0) (\pgfkeysvalueof{/pgfplots/xmax},0)};
        
    % median sample points
    \addplot[
      only marks,
      mark=asterisk,
      mark options={draw=red, line width=0.8pt, scale=1.2}
    ]
    coordinates {
    (0.104118083957826,2.24500951373025)
    (0.414823306242361,2.88039620523246)
    (0.586775866257890,2.17473727488609)
    (0.977683013815450,1.09444596348982)
    (1.10380627963211,2.44113677534516)
    (1.40601626564344,1.59456968260571)
    (1.82849233974556,0.829367573602537)

    };

\addplot[
draw=none,
fill=gray,
fill opacity=0.25
] coordinates {
(0.177430388465736,\pgfkeysvalueof{/pgfplots/ymin})
(0.331306973516325,\pgfkeysvalueof{/pgfplots/ymin})
(0.331306973516325,\pgfkeysvalueof{/pgfplots/ymax})
(0.177430388465736,\pgfkeysvalueof{/pgfplots/ymax})
};

% interval 2
\addplot[draw=none, fill=gray, fill opacity=0.25] coordinates {
(0.712216345882417,\pgfkeysvalueof{/pgfplots/ymin})
(0.888772035875738,\pgfkeysvalueof{/pgfplots/ymin})
(0.888772035875738,\pgfkeysvalueof{/pgfplots/ymax})
(0.712216345882417,\pgfkeysvalueof{/pgfplots/ymax})
};

% interval 3
\addplot[draw=none, fill=gray, fill opacity=0.25] coordinates {
(1.18874723633867,\pgfkeysvalueof{/pgfplots/ymin})
(1.34504190686512,\pgfkeysvalueof{/pgfplots/ymin})
(1.34504190686512,\pgfkeysvalueof{/pgfplots/ymax})
(1.18874723633867,\pgfkeysvalueof{/pgfplots/ymax})
};

% interval 4
\addplot[draw=none, fill=gray, fill opacity=0.25] coordinates {
(1.54489488115274,\pgfkeysvalueof{/pgfplots/ymin})
(1.63035742961651,\pgfkeysvalueof{/pgfplots/ymin})
(1.63035742961651,\pgfkeysvalueof{/pgfplots/ymax})
(1.54489488115274,\pgfkeysvalueof{/pgfplots/ymax})
};

\end{axis}
\end{tikzpicture}
    \caption{History of the distance from the UAV to its closest mobile charging region.}
    \end{subfigure}
\caption{Histories of the UAV's battery level and distance to its closest mobile charging region for the solution trajectory shown in Fig.~\ref{fig: an_example_solution}. Red asterisks indicate visits to UAV task locations.}
\label{fig: uav_battery}
\end{figure}

\begin{figure}[!t]
    \centering
    \includegraphics[width=\linewidth]{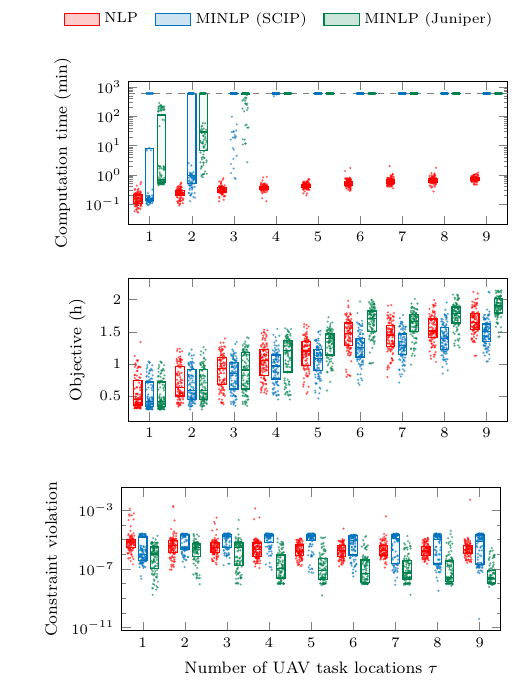}

    \caption{Comparison of NLP (red, left) using
    Algorithm~\ref{alg:homotopy}, MINLP solved using
    \texttt{SCIP} (blue, middle), and MINLP solved using
    \texttt{Juniper} (green, right), with a ten-hour time
    limit per MINLP run, across 100 Monte Carlo instances
    for each number of UAV task locations \(\tau\).
    Dots represent individual samples. Boxes show the
    25th--75th percentiles, with medians marked inside.
    The dashed line indicates the ten-hour time limit.}
    \label{fig:three_method_comparison}
\end{figure}
\section{Conclusion}

We presented a nonlinear trajectory optimization model for UAV systems with mobile charging support. This model includes nonlinear charging dynamics for the UAV, multiple spatially distributed UAV tasks, and terrain-access constraints on mobile charging stations. We based this model on a smoothing approximation of disjunctive constraints, which eliminates the need for integer programming. Compared with mixed-integer nonlinear programming, this model reduces computation time from hours to minutes in numerical simulations. In future work, we plan to extend the current model to multi-UAV systems with mobile charging support and persistent monitoring applications.

%\addtolength{\textheight}{-12cm}   % This command serves to balance the column lengths
                                  % on the last page of the document manually. It shortens
                                  % the textheight of the last page by a suitable amount.
                                  % This command does not take effect until the next page
                                  % so it should come on the page before the last. Make
                                  % sure that you do not shorten the textheight too much.

%%%%%%%%%%%%%%%%%%%%%%%%%%%%%%%%%%%%%%%%%%%%%%%%%%%%%%%%%%%%%%%%%%%%%%%%%%%%%%%%

\section*{APPENDIX}

\subsection{Proof of Theorem~\ref{theorem: Lp-norm}:}
\label{appendix:proof}
% \noindent\textit{Proof of Theorem~\ref{theorem: Lp-norm}:}
\begin{proof}
We first prove the bound in \eqref{eqn: softmin function value}. Observe that
\begin{equation}\label{eqn: phi func}
\begin{aligned} &\phi_p(z)=\textstyle \left(\sum_{i\in A_{\min}(z)} z_i^{-p}+\sum_{j\notin A_{\min}(z)} z_j^{-p}\right)^{-\frac{1}{p}}\\
 &= m^{-\frac{1}{p}}z_{\min} \left(1+q\right)^{-\frac{1}{p}},
\end{aligned}
\end{equation}  
where \(q=\frac{1}{m}\sum_{j\notin A_{\min}(z)} \left(\frac{z_j}{z_{\min}}\right)^{-p}\). Since \(q\geq 0\) and \(z_j\geq z_{\min}+\Delta\) for all \(j\notin A_{\min}(z)\), we can show that
\begin{equation}\label{eqn: q bound}
    0\leq q\leq \frac{n-m}{m} \left(1+\Delta/z_{\min}\right)^{-p}.
\end{equation}
By substituting the first inequality in \eqref{eqn: q bound} into \eqref{eqn: phi func}, we obtain the upper bound in \eqref{eqn: softmin function value}. Furthermore, we can show that
\begin{equation}\label{eqn: exp bound}
    \begin{aligned}
        &\textstyle (1+q)^{-\frac{1}{p}} =\exp\left(-\frac{1}{p}\ln (1+q)\right)\\
        &\textstyle \geq \exp\left(-\frac{1}{p}q\right)\geq \exp\left(\frac{m-n}{pm \left(1+\frac{\Delta}{z_{\min}}\right)^{p}}\right),
    \end{aligned}
\end{equation}
The first inequality in \eqref{eqn: exp bound} holds because \(\ln(1+q)\leq q\) for all \(q>0\) and the exponential function is strictly increasing. The second inequality is due to the upper bound in \eqref{eqn: q bound}. By substituting \eqref{eqn: exp bound} into \eqref{eqn: phi func}, we obtain the lower bound in \eqref{eqn: softmin function value}.  

Next, we prove \eqref{eqn: softmin gradient direction}. Without loss of generality, we assume that \(z_1=z_{\min}\). Let \(g_i\) denote the \(i\)-th element of vector \(\nabla \phi_p(z)\). We can show that \(\norm{v^\star}_2=\frac{1}{\sqrt{m}}\), and \(\langle g, v^\star\rangle = g_1 \). Hence
\begin{equation}\label{eqn: cosine angle}
    \begin{aligned}
       \textstyle \frac{\langle \nabla \phi_p(z), v^\star\rangle}{\norm{\nabla \phi_p(z)}_2\norm{v^\star}_2}=\left(\frac{1}{m}\sum_{i=1}^n \left(g_i/g_1\right)^2\right)^{-\frac{1}{2}}
    \end{aligned}
\end{equation}
Based on the definition of \(\phi_p(z)\), we can show that
\begin{equation}\label{eqn: gradient ratio}
    \begin{aligned}
        & \textstyle \sum_{i=1}^n \left(g_i/g_1\right)^2=m + \sum_{j\notin A_{\min}(z)} \left(z_j/z_{\min}\right)^{-2p-2}\\
        &\textstyle \leq m + (n-m) \left(1+\Delta/z_{\min}\right)^{-2p-2},
    \end{aligned}
\end{equation}
where the last step holds because \(z_j\geq z_{\min}+\Delta\) for all \(j\notin A_{\min}(z)\). Let \(r=\frac{n-m}{m \left(1+\Delta/z_{\min}\right)^{2p+2}}\). By substituting \eqref{eqn: gradient ratio} into \eqref{eqn: cosine angle}, we can show that
\begin{equation}
    \begin{aligned}
        \textstyle \frac{\langle \nabla \phi_p(z), v^\star\rangle}{\norm{\nabla \phi_p(z)}_2\norm{v^\star}_2} \geq \left(1+r\right)^{-\frac{1}{2}}=\exp\left(-\frac{1}{2}\ln(1+r)\right)
    \end{aligned}
\end{equation}
Since \(\ln(1+r)\leq r\) for all \(r>0\), we conclude that  \(\frac{\langle \nabla \phi_p(z), v^\star\rangle}{\norm{\nabla \phi_p(z)}_2\norm{v^\star}_2}\geq \exp\left(-\frac{1}{2}r\right) \), which is exactly \eqref{eqn: softmin gradient direction}.
\end{proof}

\subsection{MINLP Experimental Notes}

\subsubsection{Solver Settings}
\label{appendix:solver_settings}
In Table~\ref{tab:solver_settings}, all non-default solver settings for the MINLP implementations are listed. In addition, we impose a computation time limit of 10 hours for all solvers. All other settings are configured to their default.
\begin{table}
\centering
\caption{Non-default solver settings for the MINLP implementations.}
\label{tab:solver_settings}
\footnotesize
\renewcommand{\arraystretch}{1.1}
\setlength{\tabcolsep}{4pt}
\begin{tabular}{@{}lll@{}}
\hline
\textbf{Solver} & \textbf{Option} & \textbf{Value} \\
\hline
\texttt{Juniper} & \texttt{mip\_gap} & $10^{-2}$ \\
\hline
\texttt{IPOPT} & \texttt{hessian\_approximation} & \texttt{limited-memory} \\
 & \texttt{tol} & $10^{-4}$ \\
 & \texttt{acceptable\_tol} & $10^{-3}$ \\
 & \texttt{acceptable\_iter} & $5$ \\
 & \texttt{max\_iter} & $1000$ \\
 & \texttt{mu\_strategy} & \texttt{adaptive} \\
 & \texttt{expect\_infeasible\_problem} & \texttt{yes} \\
\hline
\texttt{SCIP} & \texttt{numerics/feastol} & $10^{-5}$ \\
 & \texttt{limits/gap} & $10^{-3}$ \\
\hline
\end{tabular}
\end{table}

\subsubsection{Reformulation of the Battery Dynamics for \texttt{SCIP}}
\label{appendix:formulation}
An alternative mixed-integer nonlinear formulation of the battery dynamics~\eqref{eq:battery_dynamics}, incorporating the charging dynamics~\eqref{eqn:charging_dynamic} and the discharging dynamics~\eqref{eqn: discharging_dynamics}, is given by
\begin{equation}\label{eqn:charging_branch}
    \begin{aligned}
        & \sum_{p=1}^{3} P_{kp} = 1 - Z_{k(J+1)}, \enskip P_{kp} \in \{0,1\},
         \enskip k \in [N-1],\ p \in [3], \\
        & e_k + \kappa c_k - e_{\mathrm{th}} \leq \mu (1 - P_{k1}),
             \enskip k \in [N-1], \\
        & \begin{bmatrix} e_k - e_{\mathrm{th}} \\ e_{\mathrm{th}} - e_k - \kappa c_k \end{bmatrix}
            \leq \mu (1 - P_{k2})\cdot\bm{1},
             \enskip k \in [N-1], \\
        & e_{\mathrm{th}} - e_k \leq \mu (1 - P_{k3}),
             \enskip k \in [N-1], \\
        & | e_{k+1} - g_{\mathrm{c}}^{p}(e_k, c_k) |
            \leq \mu (1 - P_{kp}),
             \enskip k \in [N-1],\ p \in [3], \\
        & | e_{k+1} - g_{\mathrm{d}}(e_k, s_k) |
            \leq \mu (1 - Z_{k(J+1)}),
             \enskip k \in [N-1],
    \end{aligned}
\end{equation}
where $g_{\mathrm{c}}^{p}$ denotes the $p$-th case of \eqref{eqn:charging_dynamic}, $P_{kp}$ is a binary variable indicating whether that case is selected at step $k$, and $\mu$ is a sufficiently large positive constant. These big-$M$ constraints select exactly one charging case when $Z_{k(J+1)}=0$ and enforce the discharge dynamics when $Z_{k(J+1)}=1$. In the latter case, \eqref{eq: binaryConstraints} also enforces $c_k=0$.

\subsubsection{Elimination of Redundant Constraints in \eqref{eq: binaryConstraints}}
\label{appendix: binaryConstraints}
Among the binary constraints in \eqref{eq: binaryConstraints} are
\begin{equation}\label{eq: chargingSpatialCondition}
    \begin{aligned}
        & g_j(r_k, r_{k+1}, s_k) \leq  \mu (1 - Z_{kj})\cdot \bm{1}^l,\enskip  k \in [N - 1], \enskip j \in [J].
    \end{aligned}
\end{equation} In \eqref{eq: defChargingCondition}, we define the function $g_j$ used within our experiments. Notably, this function includes the term $\norm{r_k - r_{k+1}}_2 - v_\texttt{c} s_k$, which is identical across all charging regions $j\in[J]$. In light of this, we can write \eqref{eq: chargingSpatialCondition} by constraining the following for all $k\in[N-1]$:
\begin{equation}\label{eq: lessRedundantCharging}
    \begin{aligned}
        & \begin{bmatrix}
       \norm{r_k - o_j}_2 - \rho_j  \\\
        \norm{r_{k+1} - o_j}_2 - \rho_j \\
        \end{bmatrix} \leq  \mu (1 - Z_{kj})\cdot\begin{bmatrix}
        1\\
        1
        \end{bmatrix}, \enskip j \in [J], \\ 
        & \|{r_{k+1} - r_k}\|_2 \leq v_{\texttt{c}}s_k + \mu Z_{k(J+1)},
    \end{aligned}
\end{equation}
where $\mu \in \mathbb{R}_{>0}$ is a sufficiently large constant. The idea is to enforce the constraint on the last line when any charging condition is selected---which occurs when $Z_{k(J+1)} = 0$---without introducing $J$ constraints per time step $k\in[N-1]$. Since the above equation is equivalent to \eqref{eq: chargingSpatialCondition} while eliminating redundant constraints, we opt to use it within our mixed-integer experiments.

% % \section*{APPENDIX}
% \section*{ACKNOWLEDGMENT} Some acknowledgements.

\bibliographystyle{IEEEtran}
\bibliography{reference_ACC}

\end{document}